\newtheorem{nt}{Remark}
\newtheorem{Th}{Theorem}
\newfont{\ssdbl}{msbm8}
\newfont{\sdbl}{msbm9}
\newfont{\dbl}{msbm10 at 12pt}
\newcommand{\Aut}{\mathop {\rm Aut}}
\newcommand{\Z}{\dz}
\newcommand{\Ker}{{\rm Ker}\:}
\newcommand{\lto}{\longrightarrow}
\def\Z{{\mathbb Z}}
\begin{document}

\author{
D.V. Osipov}

\title{Discrete Heisenberg group and its automorphism group
\thanks{This paper was written with the partial financial support of RFBR (grants no. ~13-01-12420 ofi{\_}m2, 14-01-00178 a)}}

\date{}

\maketitle

In this note we give more easy and short proof of a statement  previously proved by P.~Kahn in~\cite{Ka} that the automorphism group of the discrete Heisenberg group  ${\rm Heis}(3, \Z) $
is isomorphic to  the group $ (\Z \oplus \Z)  \rtimes GL(2,\Z)$. The method which  we suggest to construct this isomorphism
gives far more transparent picture of the structure of the automorphism group of the group   ${\rm Heis}(3, \Z) $.

We consider the discrete Heisenberg group $G = {\rm Heis}(3, \Z) $ which is the group of matrices of the form:
$$
\left(
\begin{array}{rcl}
1 & a & c \\
0 & 1 & b \\
0 & 0 & 1
\end{array}
\right) \mbox{,}
$$
where $a, b, c$ are from $\Z$. We can consider also the group $G$ as a set of all integer triples endowed with the group law:
\begin{equation}  \label{comp}
(a_1,b_1,c_1)(a_2,b_2,c_2)= (a_1 + a_2, b_1 +b _2, c_1+ c_2 + a_1b_2)   \mbox{.}
\end{equation}
It is clear that $[(1,0,0), (0,1,0)]= (0,0,1)$
and the group $G$ is generated by the elements $(1,0,0)$
and  $(0,0,1)$.

Let $\Aut(G)$ be the group of automorphisms of the group $G$.
The starting point for this note was the group homomorphism $\Z \to \Aut(G)$ constructed in~\cite[\S~5]{OP} and given by the formula
\begin{equation}  \label{expl-auto}
R_d ((a,b,c))=  \left(a +db, b, c + \frac{b(b-1)d}{2} \right)  \mbox{,}
\end{equation}
where $R_d$ is the automorphism of the group $G$ induced by an element $d \in \Z$.  Formula~\eqref{expl-auto}
was obtained in~\cite{OP} after some calculation of automorphisms which are analogs of ''loop rotations'' in loop groups.
But the fact that formula~\eqref{expl-auto} defines a homomorphism from the group $\Z$ to the group  $\Aut(G)$ is also an easy  direct consequence of formulas~\eqref{comp} and~\eqref{expl-auto}.

\smallskip

For the group $G $ we have the following exact sequence of groups
\begin{equation}   \label{center}
1 \lto C \lto G \stackrel{\lambda}{\lto} H \oplus P \lto 1  \mbox{,}
\end{equation}
where $1$ is the trivial group, the group $C = \{(0,0,c) \mid c \in \Z \} \simeq \Z$ is the center of $G$, and $H = \{(a,0,0) \mid a \in \Z  \} \simeq \Z$, $P =\{(0,b,0) \mid b \in \Z\}  \simeq \Z$.

From exact sequence~\eqref{center} we obtain the following sequence
\begin{equation}  \label{aut}
1 \lto {\rm Inn} (G)  \stackrel{\theta}{\lto}  {\rm Aut} (G) \stackrel{\vartheta}{\lto} GL(2,\Z) \mbox{,}
\end{equation}
where the group of inner automorphisms ${\rm Inn}(G) \simeq G/C \simeq \Z \oplus \Z$, and the homomorphism $\vartheta$ is the homomorphism ${\rm Aut}(G)  \to {\rm Aut(\Z \oplus \Z)}$, which is induced by the homomorphism $\lambda$ from exact sequence~\eqref{center}.
It is clear that $\mathop{\rm Im}(\theta)  \subset \Ker(\vartheta)$.

We claim that sequence~\eqref{aut}   is exact in the term ${\rm Aut}(G)$. Indeed, it is enough to prove that $\Ker(\vartheta) \subset
\mathop{\rm Im} (\theta)$. Consider any  $\omega \in \Ker(\vartheta)$. We have $\omega ((1,0,0))= (1,0, c_1)$ and $\omega ((0,1,0))=(0,1,c_2)$
for some integer $c_1$ and $c_2$. By direct calculations, we obtain $(c_2, -c_1, 0) (1,0,0)= (1,0,c_1) (c_2, -c_1, 0)$  and
${(c_2, -c_1, 0) (0,1,0)= (0,1, c_2) (c_2, -c_1, 0)}$. Since elements $(1,0,0)$ and $(0,1,0)$
generate the group $G$,
we obtain that the inner automorphism defined by the element $(c_2, -c_1, 0)$ coincides with the automorphism $\omega$.

We claim also that the homomorphism~$\vartheta $ from sequence~\eqref{aut} is surjective.
Indeed, by formula~\eqref{expl-auto} we have a homomorphism from the group $\Z$ to the group $\mathop{\rm Aut}(G)$. It is easy to see that this homomorphism is a section of the homomorphism $\vartheta$ over the subgroup
$\left\{\left(
\begin{array}{rl}
  1 & d \\
 0 & 1
\end{array}
\right) \right\}_{d \in \Z}$  of the group $GL(2,\Z)$. By formula~\eqref{expl-auto},  the action of the matrices from this subgroup  on elements of the group $G$ is given as:
\begin{equation}  \label{upper-hom}
\left(
\begin{array}{rl}
  1 & d \\
 0 & 1
\end{array}
\right)
(a,b,c) = \left(a +db, b, c + \frac{b(b-1)d}{2} \right)  \mbox{.}
\end{equation}

Symmetrically to the formula~\eqref{upper-hom} we can  write the following formula:
\begin{equation}  \label{bottom-hom}
\left(
\begin{array}{rl}
  1 & 0  \\
 d & 1
\end{array}
\right)
(a,b,c) = \left(a, da + b, c + \frac{a(a-1)d}{2} \right)  \mbox{.}
\end{equation}
By an easy direct calculation we have that formula~\eqref{bottom-hom}
defines a correct automorphism  of the group $G$. This automorphism depends on $d \in \Z$ and
defines the homomorphism from the subgroup  $\left\{\left(
\begin{array}{rl}
  1 & 0 \\
 d & 1
\end{array}
\right) \right\}_{d \in \Z}$ of the group $GL(2,\Z)$ to the group $\mathop{\rm Aut}(G)$. This homomorphism defines a section of the homomorphism
$\vartheta$ over this subgroup.

Besides, it is easy to see that the homomorphism from the subgroup $\left\{\left(
\begin{array}{rl}
  \pm 1 & 0 \\
 0 & 1
\end{array}
\right) \right\}$ of the group $GL(2, \Z)$ to the group $\mathop{ \rm Aut}(G)$  given by the formula
\begin{equation}  \label{minus-auto}
\left(
\begin{array}{rl}
  - 1 & 0 \\
 0 & 1
\end{array}
\right) (a,b,c) = (-a,b, -c-b)
\end{equation}
defines a section of
the homomorphism
$\vartheta$ over this subgroup.

By the classical result (see its proof, for example, in~\cite[Appendix~A]{KT}), the group $SL(2, \Z)$ has a presentation:
$$
<\rho ,\tau  \; \mid \; \rho \tau \rho= \tau \rho \tau  \mbox{,}  \;  (\rho \tau \rho)^4 =1 >  \mbox{,}
$$
where the element $\rho$ corresponds to the matrix $A =\left(
\begin{array}{rl}
  1 & 1 \\
 0 & 1
\end{array}
\right) $ and the element $\tau$ corresponds to the matrix $B = \left(
\begin{array}{rl}
  1 & 0 \\
 -1 & 1
\end{array}
\right) $.
The group $GL(2,\Z)$ is generated by the elements of the group $SL(2,\Z)$ and the matrix $ D =\left(
\begin{array}{rl}
  - 1 & 0 \\
 0 & 1
\end{array}
\right) $.
Therefore the group $GL(2,\Z)$ has a presentation:
\begin{equation}   \label{present}
<\rho ,\tau, \kappa  \; \mid \; \rho \tau \rho= \tau \rho \tau  \mbox{,}  \;  (\rho \tau \rho)^4 =1  \mbox{,}  \;
\kappa \tau \kappa^{-1} = \tau^{-1}  \mbox{,}  \; \kappa \rho \kappa^{-1} = \rho^{-1} \mbox{,}  \; \kappa^2=1
>  \mbox{,}
\end{equation}
where the element $\kappa$ corresponds to the matrix $D$.
Thus we obtained that the homomorphism  $\vartheta$ is surjective.

We note that the group $\mathop{ \rm Aut}(G)$ contains a distinguished subgroup $\mathop{\rm Aut}^+(G) = \vartheta^{-1} (SL(2, \Z))$
of index $2$. Since for any $ \omega  \in \mathop{ \rm Aut}(G)$ we have
$$
\omega((0,0,1))= \omega ([(1,0,0), (0,1,0)])= [\omega(1,0,0), \omega(0,1,0)]=  (0,0, \det(\vartheta(\omega))  )  \mbox{,}
$$
we obtain that the group $\mathop{\rm Aut}^+(G) $  consists of elements $\omega$ of the  group $\mathop{ \rm Aut}(G)$ such that $\omega((0,0,1))=(0,0,1)$. In other words, the group $\mathop{\rm Aut}^+(G)$ consists of automoprhisms of the group $G$ which act identically on the center of the group $G$.

\begin{Th} \label{semidirect}
Partial sections of the homomorphism $\vartheta$ given by formulas~\eqref{upper-hom}, \eqref{bottom-hom} and~\eqref{minus-auto} are glued together and define a section of the homomorphism $\vartheta$ over the whole group $GL(2,\Z)$. Hence, the group $\mathop{\rm Aut}(G) \simeq (\Z \oplus \Z)  \rtimes GL(2,\Z)$,
where an action of $GL(2,\Z)$ on $\Z \oplus \Z$ (given by inner automorphisms in the group $\mathop{\rm Aut}(G)$) is the natural matrix action.
Besides, $\mathop{\rm Aut}^+(G) \simeq (\Z \oplus \Z)  \rtimes SL(2,\Z)$.
\end{Th}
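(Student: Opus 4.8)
The plan is to exploit the finite presentation~\eqref{present} of $GL(2,\Z)$ to glue the three partial sections into a single homomorphic section of $\vartheta$. Write $\hat\rho,\hat\tau,\hat\kappa\in\Aut(G)$ for the automorphisms obtained by specialising formulas~\eqref{upper-hom}, \eqref{bottom-hom} and~\eqref{minus-auto} at the matrices $A$, $B$, $D$; thus $\hat\rho$ is~\eqref{upper-hom} with $d=1$, $\hat\tau$ is~\eqref{bottom-hom} with $d=-1$, and $\hat\kappa$ is~\eqref{minus-auto}. By construction $\vartheta(\hat\rho)=A$, $\vartheta(\hat\tau)=B$ and $\vartheta(\hat\kappa)=D$, so each lifts the corresponding generator of $GL(2,\Z)$.

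First I would invoke the universal property of the presentation~\eqref{present}: the assignment $\rho\mapsto\hat\rho$, $\tau\mapsto\hat\tau$, $\kappa\mapsto\hat\kappa$ extends to a homomorphism $s\colon GL(2,\Z)\to\Aut(G)$ precisely when $\hat\rho,\hat\tau,\hat\kappa$ satisfy, on the nose in $\Aut(G)$, the five defining relations $\rho\tau\rho=\tau\rho\tau$, $(\rho\tau\rho)^4=1$, $\kappa\tau\kappa^{-1}=\tau^{-1}$, $\kappa\rho\kappa^{-1}=\rho^{-1}$, $\kappa^2=1$. Since $\vartheta(\hat\rho),\vartheta(\hat\tau),\vartheta(\hat\kappa)$ already satisfy these relations in $GL(2,\Z)$ and $\Image(\theta)=\Ker(\vartheta)$, each relation is known a priori to hold up to an element of $\mathrm{Inn}(G)$; the whole content of the theorem is that this inner ambiguity is in fact trivial.

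This verification is the heart of the matter and I expect it to be the main obstacle. Since $\hat\rho,\hat\tau,\hat\kappa$ are automorphisms and $G=\langle(1,0,0),(0,1,0)\rangle$, it suffices to evaluate both sides of each relation on the two generators $(1,0,0)$ and $(0,1,0)$ and to compare all three coordinates, using~\eqref{comp}. The delicate relations are the braid relation $\hat\rho\hat\tau\hat\rho=\hat\tau\hat\rho\hat\tau$ and the order relation $(\hat\rho\hat\tau\hat\rho)^4=\mathrm{id}$, where the quadratic cocycle terms $\tfrac{b(b-1)d}{2}$ and $\tfrac{a(a-1)d}{2}$ from~\eqref{upper-hom}--\eqref{bottom-hom} enter the third coordinate and must be shown to cancel exactly. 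Here one uses that $\vartheta(\hat\rho\hat\tau\hat\rho)=ABA=\bigl(\begin{smallmatrix}0&1\\-1&0\end{smallmatrix}\bigr)$ has order $4$, so that $\hat\rho\hat\tau\hat\rho$ cyclically permutes $(1,0,0)$ and $(0,1,0)$ up to signs and central factors, and the fourfold iterate can be tracked back to the identity. The three $\kappa$-relations involve only the sign reversals of~\eqref{minus-auto} and are checked in the same manner.

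Once $s$ is available, $\vartheta\circ s$ fixes every generator, hence $\vartheta\circ s=\mathrm{id}$, so $s$ is a section and~\eqref{aut} becomes a split short exact sequence $1\to\Z\oplus\Z\to\Aut(G)\to GL(2,\Z)\to1$; this gives $\Aut(G)\simeq(\Z\oplus\Z)\rtimes GL(2,\Z)$. To identify the action I would use $\phi\,\iota_g\,\phi^{-1}=\iota_{\phi(g)}$ for $\phi\in\Aut(G)$ and $\iota_g\in\mathrm{Inn}(G)$: under the identifications $\mathrm{Inn}(G)\simeq G/C\simeq\Z\oplus\Z$ sending $(a,b,c)\mapsto(a,b)$, the $\lambda$-image of $\phi(g)$ is $\vartheta(\phi)$ applied to $(a,b)$, so conjugation by $s(M)$ is the natural matrix action of $M$. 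Finally, restricting $s$ to $SL(2,\Z)$ yields a section of $\vartheta$ over $SL(2,\Z)$, whence $\Aut^+(G)=\vartheta^{-1}(SL(2,\Z))\simeq(\Z\oplus\Z)\rtimes SL(2,\Z)$.
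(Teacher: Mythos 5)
Your proposal is correct and follows essentially the same route as the paper: lift the generators $A$, $B$, $D$ via formulas~\eqref{upper-hom}, \eqref{bottom-hom}, \eqref{minus-auto}, verify the defining relations of presentation~\eqref{present} in $\Aut(G)$ by evaluating on the two generators $(1,0,0)$ and $(0,1,0)$ of $G$, and conclude that the split exact sequence gives the semidirect product with the natural matrix action. The only thing separating your plan from the paper's proof is that the latter actually carries out the (routine but necessary) coordinate computations for the braid relation, the order-$4$ relation, and the $\kappa$-relations, which you correctly identify as the remaining work.
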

\begin{proof}  \hspace{-0.3cm}.
From the above discussion we see that it is enough to check that automorphisms of the group $G$ which are given by the matrices
$A =\left(
\begin{array}{rl}
  1 & 1 \\
 0 & 1
\end{array}
\right)$, $B =\left(
\begin{array}{rl}
  1 & 0 \\
 -1 & 1
\end{array}
\right) $  and  $ D =\left(
\begin{array}{rl}
  - 1 & 0 \\
 0 & 1
\end{array}
\right) $  (with the help of formulas~\eqref{upper-hom}, \eqref{bottom-hom} and~\eqref{minus-auto}) satisfy relations from formula~\eqref{present}. Since the group $G$ is generated by elements $(1,0,0)$  and $(0,1,0)$, it is enough to check these relations  between  automorphisms after application to these elements. We have
\begin{eqnarray*}
ABA (1,0,0) = AB (1,0,0)= A(1,-1,0)= (0,-1,1)  \mbox{, and} \\
BAB (1,0,0)= BA (1,-1,0)= B(0,-1,1)= (0,-1,1)  \mbox{.}
\end{eqnarray*}
Besides, we have
\begin{eqnarray*}
ABA (0,1,0) = AB (1,1,0)= A(1,0,0)= (1,0,0)   \mbox{, and} \\
BAB (0,1,0)= BA(0,1,0)= B(1,1,0)= (1,0,0)  \mbox{.}
\end{eqnarray*}
Thus we see that the automorphism of the group $G$ given by the composition $ABA$ coincides with the automorphism of $G$ given by the
composition $BAB$. We will check now that the composition $(ABA)^4$ defines the identical automorphism of the group $G$. Indeed, we have checked that $ABA (1,0,0)= (0,-1,1)= (0,1,0)^{-1}$. Therefore, using again above calculations, we have
$$
(ABA)^2(1,0,0) = ABA ((0,1,0)^{-1})= (ABA (0,1,0))^{-1}= (1,0,0)^{-1}  \mbox{.}
$$
Hence we obtain
$$
(ABA)^4(1,0,0)= (ABA)^2 ((1,0,0)^{-1})= (1,0,0)  \mbox{.}
$$
Analogously we have
\begin{eqnarray*}
(ABA)^2(0,1,0)= ABA (1,0,0)= (0,1,0)^{-1}  \mbox{, and hence} \\
(ABA)^4 (0,1,0)= (ABA)^2((0,1,0)^{-1})= (0,1,0)  \mbox{.}
\end{eqnarray*}
Besides, we calculate
\begin{eqnarray*}
DAD^{-1}(0,1,0)= DA(0,1,-1)= D(1,1,-1)= (-1,1,0)= A^{-1}(0,1,0)  \mbox{, and} \\
DAD^{-1}(1,0,0)= DA(-1,0,0)= D(-1,0,0)= (1,0,0)= A^{-1}(1,0,0)  \mbox{, and} \\
DBD^{-1}(0,1,0)= DB (0,1,-1)= D(0,1,-1)= (0,1,0)=B^{-1}(0,1,0)  \mbox{, and}  \\
DBD^{-1} (1,0,0)= DB (-1, 0, 0)= D(-1, 1,-1 )= (1,1,0)= B^{-1}(1,0,0)  \mbox{.}
\end{eqnarray*}
We have also directly from formula~\eqref{minus-auto} that the automorphism of the group $G$ given by the composition $D^2$
is the identity automorphism.
The theorem is proved.

\end{proof}

\begin{nt}
P.~Kahn constructed in~\cite{Ka}  a section of the homomorphism $\vartheta$ over the  group $GL(2,\Z)$ by direct calculations inside the group $\mathop{\rm Aut}(G)$ (in contrast to our approach, which we started from the explicit homomorphism given by formula~\eqref{expl-auto}  and this homomorphism was obtained in~\cite[\S~5]{OP} from an analog of "loop rotations").  We note, that the section $GL(2,\Z) \to \mathop{\rm Aut}(G)$ constructed by P.~Kahn
does not coincide with the section from Theorem~\ref{semidirect}.
\end{nt}

    Let $\alpha_1 $ and $\alpha_2$ be two sections of the homomorphism $\vartheta$. We define $\varphi(g) = \alpha_2(g)
    \alpha_1(g)^{-1}  \in \Z \oplus \Z$ for any $g \in GL(2,\Z)$. Then $\varphi(g_1 g_2)=
    \varphi(g_1) + g_1 \cdot \varphi(g_2) $ for any $g_1, g_2 \in GL(2,\Z)$, i.e. the map $\varphi :
    GL(2,\Z)  \to \Z \oplus \Z$ is a $1$-cocycle. Conversely, for any section $\alpha$ of the homomorphism
    $\vartheta$ and for
     any $1$-cocycle $ \varphi :
    GL(2,\Z)  \to \Z \oplus \Z$ we have that the map $(\varphi \cdot \alpha) : GL(2,\Z)  \to \Aut(G)$ is a homomorphism which is a section of the homomorphism
    $\vartheta$. Using the Mayer-Vietoris sequence for the presentation $SL(2,\Z) = \Z_4 *_{\Z_2} \Z_6$ as amalgamated free product P.~Kahn computed in~\cite{Ka} that $H^1(SL(2, \Z), \Z \oplus \Z) =0$ for the natural matrix action of $SL(2,\Z)$ on $\Z \oplus \Z$. Hence, from the Lyndon-Hochschild-Serre spectral sequence applied to $SL(2,\Z) \hookrightarrow GL(2,\Z)$ one immediately obtains that $H^1(GL(2, \Z), \Z \oplus \Z)=0$. Therefore $\varphi$ is a $1$-coboundary, i.e. there is an element $a \in \Z \oplus \Z$
    such that $\varphi (g)= ga -a$ for any $g \in GL(2,\Z)$.
    Hence we obtain that the group of $1$-cocycles of  $GL(2,\Z)$ with values in  $\Z \oplus \Z$
    is isomorphic to the group $\Z \oplus \Z$. Thus, the set of all sections of the homomorphism $\vartheta$ is a principal homogeneous space for the group $\Z \oplus \Z$.

\vspace{0.3cm}

\noindent
Steklov Mathematical Institute of RAS \\
Gubkina str. 8, 119991, Moscow, Russia \\
{\it E-mail:}  ${d}_{-} osipov@mi.ras.ru$ \\


\begin{thebibliography}{99}


\bibitem{Ka} Kahn P., {\em Automorphisms of the discrete Heisenberg groups}, preprint, 2005, 7~pp., available at
http://www.math.cornell.edu/m/People/Faculty/kahn



\bibitem{KT}
Kassel C., Turaev V., {\em
Braid groups.}
With the graphical assistance of O.~Dodane. Graduate Texts in Mathematics, 247. Springer, New York, 2008.




\bibitem{OP} Osipov D. V., Parshin A. N.,  {\em Representations of discrete Heisenberg group on distribution spaces of two-dimensional local fields}, Proceedings of the Steklov Institute of Mathematics, vol. 292 (2016), to appear.

\end{thebibliography}
\end{document}